\newcommand{\Rmnum}[1]{\expandafter\@slowromancap\romannumeral #1@}
	\newtheorem{theorem}{Theorem}
	\newtheorem{observation}[theorem]{Observation}
	\newtheorem{definition}{Definition}
	\newtheorem{claim}[theorem]{Claim}
	\newtheorem{question}[theorem]{Question}
	\newtheorem{lemma}[theorem]{Lemma}
	\newenvironment{proof}{\noindent {\bf
			Proof.}}{\rule{3mm}{3mm}\par\medskip}
	\newcommand{\q}{\uppercase\expandafter{\romannumeral1}}
	\newcommand{\qq}{\uppercase\expandafter{\romannumeral2}}
	\newcommand{\qqq}{\uppercase\expandafter{\romannumeral3}}
	\newcommand{\qqqq}{\uppercase\expandafter{\romannumeral4}}
	\newcommand{\qqqqq}{\uppercase\expandafter{\romannumeral5}}
	\newcommand{\qqqqqq}{\uppercase\expandafter{\romannumeral6}}
\begin{document}
		
\title{On Hedetniemi's conjecture and the Poljak-R\"odl function\footnote{This note is based on an invited lecture at the MATRIX program
		``Structural Graph Theory Downunder'' held in Creswick, Australia,
		Nov. 24--Dec. 1st, 2019. The main part of the lecture is to explain Shitov's proof, which is based on Shitov's paper, but  more explanations are added. }}
\author{ 
	Xuding Zhu\thanks{Zhejiang Normal University, 	
		email:xdzhu@zjnu.edu.cn. 
		Grant Numbers: NSFC 11971438 and 111 project of Ministry of Education of China.}}


	\maketitle
	\begin{abstract}
		
		Hedetniemi conjectured in 1966 that $\chi(G \times H) = \min\{\chi(G), \chi(H)\}$
		for any graphs G and H.  Here $G\times H$ is the graph with vertex set
	$	V(G)\times V(H)$ defined by putting $(x,y)$ and $(x',y')$ adjacent if and only if
		$xx'\in E(G)$ and $yy'\in V(H)$. This conjecture received a lot of attention in the past half century. 
		It was disproved recently by Shitov. 
		The Poljak-R\"{o}dl function is defined as $f(n) = \min\{\chi(G \times H): \chi(G)=\chi(H)=n\}$. 
		Hedetniemi's conjecture is equivalent to saying $f(n)=n$  for all integer $n$. Shitov's result shows that 
		$f(n)<n$ when $n$ is sufficiently large.  Using Shitov's result, Tardif and Zhu showed that  $f(n) \le n - (\log n)^{1/4}$ for sufficiently large  $n$. Using Shitov's method, He--Wigderson showed that for $\epsilon \approx 10^{-9}$ and $n$ sufficiently large, $f(n) \le (1-\epsilon)n$. In this note we show that a slight modification of the proof in the paper of Zhu and Tardif   shows that  $f(n) \le (\frac 12 + o(1))n$ for sufficiently large $n$.  On the other hand, it is unknown whether $f(n)$ is bounded by a constant. However, we do know that if $f(n)$ is bounded by a constant, then 
		the smallest such constant is at most $9$. This lecture note gives self-contained proofs of  the  above mentioned results.
	\end{abstract}
	
	\section{Introduction}

	The \emph{ product}    $G \times H$ of  graphs $G$ and $H$  has vertex set $V(G) \times V(H)$ and has  $(x, y)$ adjacent to $(x',y')$   if and only if $xx' \in E(G)$ and $yy' \in E(H)$.  Many names for this product are used in the literature, including   the \emph{categorical product}, the \emph{tensor product} and the 
	\emph{direct product}. It is the most important product in this lecture note. We just call it \emph{the product}. Edges in a graph $G$ are denoted by $xy \in E(G)$ or $x \sim y$ (in $G$).
	
	 A proper colouring $\phi$ of $G$  induces a proper colouring $\Phi$  of $G \times H$ defined as $\Phi(x,y) = \phi(x)$. So $\chi(G \times H) \le   \chi(G)$.  Symmetrically, we also have 
	 $\chi(G \times H) \le \chi(H)$. Therefore $\chi(G \times H) \le \min \{\chi(G), \chi(H)\}$.
	 In 1966, 
	 Hedetniemi conjectured in  \cite{Hedetniemi} that $\chi(G \times H) = \min \{\chi(G), \chi(H)\}$  for all graphs $G$ and $H$. This conjecture received a lot of attention in the past half century  (see \cite{ES,Klav, Sauer,Tardif,Zhu,Zhu-frac}). Some special cases are confirmed.  In particular, it is known that if 
	 $\min\{\chi(G), \chi(H)\} \le 4$, then the conjecture holds \cite{ES}. Also, a fractional version of Hedetniemi's conjecture is true \cite{Zhu-frac}. However,  Shitov recently refuted Hedetniemi's conjecture    \cite{Shitov}. He proved that for sufficiently large $n$, there are $n$-chromatic graphs $G$ and $H$ with 
	 $\chi(G \times H) < n$.  
	  
	  	The Poljak-R\"{o}dl function \cite{PR} is defined as 	
	  	$$f(n) = \min \{\chi(G \times H): \chi(G)=\chi(H)=n\}.$$
	  	Hedetniemi's conjecture is equivalent to saying $f(n)=n$ for all positive integer $n$. Shitov's result shows that  $f(n) < n$ for sufficiently large $n$. Right after Shitov put his result on arxiv, 
	  	using his result, Tardif and Zhu \cite{TZ2019} showed that the difference $n-f(n)$ can be arbitrarily large. Indeed, they proved that  $f(n) \le n- (\log n)^{1/4}$ for sufficiently large $n$. It is also shown in \cite{TZ2019} that if    Stahl's conjecture in \cite{Stahl} on the multi-chromatic number of Kneser graphs is true, then $\lim_{n \to \infty} f(n)/n \le 1/2$. He--Wigderson, using Shitov's method, proved that $f(n) \le (1- \epsilon)n$ for 
	  	$\epsilon \approx 10^{-9}$ and suffciently large $n$. 
	  	In this note we prove that the conclusion  $\lim_{n \to \infty} f(n)/n \le 1/2$ holds without assuming Stahl's conjecture. 
	  	
	  	\section{Exponential graph}
	  	
	  	One of the standard tools used in the study of Hedetniemi's conjecture is the concept of \emph{exponential graphs}.  Let $c$ be a positive integer. We denote by $[c]$ the set 
	  	$\{1,2,\ldots, c\}$.  For a graph $G$, the exponential graph 
	  	$K_c^G$ has  vertex set $$\{f: f \text{ is a mapping from $V(G) \to [c]$}\},$$
	  	 with $f \sim g$ if and only if for any edge $xy \in E(G)$, $f(x) \ne g(y)$. 
	  		In particular, $f\sim f$ is a loop in $K_c^G$ if and only if $f$ is a proper $c$-colouring of $G$.
	  		So if $\chi(G) > c$, then $K_c^G$ has no loop. 
	  		
	  		For convenience, when we study properties of $K_c^G$, vertices in $K_c^G$ will be called \emph{ maps}. The term ``vertices" is reserved for 
	  	vertices of $G$. We write a map in $K_c^G$ or a map from $G$ to $[c]$ when we refer to a vertex of $K_c^G$.  For a map $f \in K_c^G$, the image set of $f$ is $Im(f) = \{f(v): v \in V(G)\}$.
	  	Note that for $f, g \in K_c^G$, if $Im(f) \cap Im(g) = \emptyset$, then $f \sim g$. 
	  	For $i \in [c]$, we denote by $g_i$ the constant map $g_i(v)=i$ for all $v \in V(G)$. 
	  	So $Im(g_i) = \{i\}$. Thus $\{g_i: i\in [c]\}$ induces a $c$-clique in $K_c^G$ for any graph $G$ and $\chi(K_c^G) \ge c$. 
	  	
	  	For any graph $G$, the mapping $\Psi: V(G \times K_c^G) \to [c]$ defined as $\Psi(x, f) = f(x)$ is a proper colouring of $G \times K_c^G$. So $\chi(G \times K_c^G) \le c$. 
	  	If $\chi(G) > c$ and $\chi(K_c^G) > c$, then we would have a counterexample to Hedetniemi's conjecture
	  	(with $H=K_c^G$).
	  	
	  	For two graphs $G$ and $H$, a \emph{homomorphism from $G$ to $H$} is a mapping $\phi: V(G) \to V(H)$ that preserves edges, i.e., for every edge $xy$ of $G$, $\phi(x)\phi(y)$ is an edge of $H$. We say $G$ is \emph{homomorphic} to $H$, and write $G \to H$,  if there is a homomorphism from $G$ to $H$. The ``homomorphic" relation ``$\to$"  is a quasi-order. It is reflexive and transitive: if $G \to H$ and $H \to Q$ then $G \to Q$.  The composition $\psi \circ \phi$ of a homomorphism 
	  	$\phi$ from $G$ to $H$ and a homomorphism $\psi$ from $H$ to $Q$ 
	  	is a homomorphism from $G$ to $Q$. 
	  	
	  	 Note that a homomorphism from a graph $G$ to $K_c$ is equivalent to a proper $c$-colouring of $G$. Thus is $G \to H$, then $\chi(G) \le \chi(H)$. 
	  	
	  	For any graph $H$, if $\Psi: V(G \times H) \to [c]$ is a proper colouring, then the mapping
	  	sending $u \in V(H)$ to $f_u \in K_c^G$ defined as $f_u(v)=\Psi(u,v)$ is a homomorphism from $H$ to $K_c^G$. In this sense, $K_c^G$ is the largest graph $H$ in the order of homomorphism with the property 
	  	that $\chi(G \times H) \le c$.

	  	Thus if $\chi(K_c^G) =c$, then for any graph $H$ with $\chi(H) > c$, $\chi(G \times H) > c$.  So Hedetniemi's conjecture is equivalent to the following statement:
	  	
	  	\bigskip
	  	{\em If $\chi(G) >c$, then $\chi(K_c^G)=c$.}
	  	\bigskip
	
    The concept of  exponential graphs was first used in \cite{ES}, where it is shown that 
  if $\chi(G) \ge 4$, then $K_3^G$ is $3$-colourable. Hence the product of two $4$-chromatic graphs has chromatic number $4$. 
  
  The result of El-Zahar and Sauer is still the best result in the positive direction of Hedetniemi's conjecture. We do not know whether or not  the product of two $5$-chromatic graphs equals $5$. 
  On the other hand, there is a nice strengthening of this result by Tardif \cite{Tardif2} in the study of multiplicative graphs. We say a graph $Q$ is \emph{multiplicative} if for any two graphs $G,H$, $G \not\to Q$ and $H \not\to Q$ implies that $G \times H \not\to Q$.   Hedetniemi's conjecture is equiavelnt to say that $K_n$ is multiplicative for any positive integer $n$. El-Zahar and Sauer proved that $K_3$ is multiplicative. H\"{a}ggkvist, Hell, Miller and Neumann Lara \cite{HHMN} proved that odd cycles are multiplicative and Tardif \cite{Tardif2} proved that circular cliques $K_{p/q}$ for $p/q < 4$ are multiplicative, where $K_{p/q}$ has vertex set $[p]$ with $i\sim j$ if and only if $q \le |i-j| \le p-q$. (So $K_{p/1}=K_p$ and $K_{(2k+1)/k}=C_{2k+1}$ ).
  
This is a little bit astray from the main track. We come back to the proofs.

\section{Shitov's Theorem}

We denote by  $G[K_q]$ the graph  obtained from $G$ by \emph{blowing up} each vertex of $G$ into a $q$-clique. The vertices of $G[K_q]$ are denoted by $(x, i)$, where $x \in V(G)$ and $i \in [q]$.  So $(x,i)$ and $ (y,j)$ are adjacent in $G[K_q]$ if and only if 
either $x \sim y$ or $x=y$ and $i \ne j$. For a graph $G$, the 
{\em independence number}  $\alpha(G)$ of $G$ is the   size of a largest independent set in $G$.
 This section proves the following result of Shitov:

	\begin{theorem}[Shitov]  
		\label{thm-shitov}
		Assume $G$ is a graph with $|V(G)|=p$, $\alpha(G) \le \frac{p}{4.1}$ and ${\rm girth}(G) \ge 6$. Let  
		 $q \ge 2^{p-1}p^2$ and $c=4q+2$. 
		 Then  $\chi(G[K_q]) > c$ and 
		$\chi \left  ( K_c^{G[K_q]} \right ) > c$.
	\end{theorem}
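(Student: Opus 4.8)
The plan is to prove the two inequalities separately; the first is routine and the second is the heart of Shitov's argument. For $\chi(G[K_q])>c$: a proper colouring of $G[K_q]$ is exactly a $q$-fold colouring of $G$, since each fibre $\{(x,i):i\in[q]\}$ is a $q$-clique (so its $q$ colours are distinct) and adjacent fibres must receive disjoint colour sets. Hence $\chi(G[K_q])=\chi_q(G)$, the $q$-fold chromatic number. Counting colour--vertex incidences, a $q$-fold colouring with $N$ colours has every colour class an independent set of size at most $\alpha(G)$, so $qp\le N\alpha(G)$ and $\chi_q(G)\ge qp/\alpha(G)\ge 4.1\,q$. As the hypotheses force $p$ (hence $q\ge 2^{p-1}p^2$) to be large, $4.1q>4q+2=c$, giving $\chi(G[K_q])>c$.

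For the second inequality I would argue by contradiction, writing $H=G[K_q]$ and assuming $\phi$ is a proper $c$-colouring of $K_c^{H}$. The constant maps $g_1,\dots,g_c$ form a $c$-clique, so after relabelling I may assume $\phi(g_i)=i$. Since every vertex of $H$ lies in a $q$-clique (so has a neighbour), a map $f$ is adjacent to $g_i$ if and only if $i\notin Im(f)$; therefore $\phi(f)=j$ forces $f\not\sim g_j$, i.e. $j\in Im(f)$. Thus every map obeys the constraint $\phi(f)\in Im(f)$, and the task reduces to producing two adjacent maps with equal $\phi$-value.

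The core --- and the step I expect to be the main obstacle --- is to build a structured family of maps on which $\phi(f)\in Im(f)$ is so rigid that properness must fail. I would use maps that on each fibre approximate a partial $q$-fold colouring, and exploit the three hypotheses as follows. Because $c=4q+2$ sits just below the threshold $4.1q$ from the first part, no map in the family can have all fibres ``full,'' which confines where its image may lie. The condition ${\rm girth}(G)\ge 6$ makes $G$ locally sparse --- adjacent vertices share no common neighbour and short cycles are absent --- so one may re-colour a single fibre, or the neighbourhood of one vertex, without creating conflicts elsewhere in $H$; these local perturbations are what let one compare $\phi$ on pairs of nearly-equal maps. Finally $q\ge 2^{p-1}p^2$ supplies the room: there are at most $2^{p}$ subsets of $V(G)$ and $O(p^2)$ relevant pairs, and choosing $q$ to dominate these lets a pigeonhole/averaging argument locate, within the family, adjacent maps $f\sim f'$ whose images force $\phi(f)=\phi(f')$. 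Making the constants mesh --- that $c=4q+2$ is small enough while $q$ is exponentially large enough for the union bound over the $2^{p}$ colour patterns on $V(G)$ --- is the delicate part of the proof.
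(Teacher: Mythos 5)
Your first inequality is correct and is essentially the paper's argument: both reduce to $\chi(G[K_q])\ge qp/\alpha(G)\ge 4.1q>c$. Your setup for the second inequality is also right as far as it goes: normalising $\Psi(g_i)=i$ on the clique of constant maps and deducing $\Psi(\phi)\in Im(\phi)$ is exactly the paper's first observation.

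From that point on, however, you have a plan rather than a proof, and the plan does not contain the ideas that make the argument work. The paper proceeds by: (i) restricting attention to \emph{simple} maps (constant on each $q$-clique fibre), which form a copy of $K_c^{G^o}$ with $c^p$ vertices; (ii) defining $I(v,b)=\{\phi \text{ simple}: \Psi(\phi)=b=\phi(v)\}$ and using a counting argument (this is where $q\ge 2^{p-1}p^2$ enters, via the inequality $(c/2)^p<2p^2c^{p-1}$ forcing $c<2^{p+1}p^2$) to find a vertex $v$ for which more than $c/2$ colours $b$ have $|I(v,b)|\ge 2pc^{p-2}$; (iii) constructing, for $t=2q+1,\dots,4q+2$, the non-simple maps $\mu_t$ sending $(x,i)$ to $i$, to $q+i$, or to $t$ according to whether $d_G(x,v)\in\{0,2\}$, $d_G(x,v)=1$, or $d_G(x,v)\ge 3$ --- girth $6$ is used precisely here to show these $2q+2$ maps form a clique, which forces some $\mu_t$ to receive its own colour $t$; and (iv) choosing a large colour $b\notin\{1,\dots,2q,t\}$, building a simple map $\theta$ adjacent to $\mu_t$ with $\Psi(\theta)=b$, and showing that every $\phi\in I(v,b)$, being non-adjacent to $\theta$, must take a value in $\{b,t\}$ at some vertex $x\ne v$, which caps $|I(v,b)|$ at $2(p-1)c^{p-2}$ and contradicts largeness. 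None of these steps --- the large-$I(v,b)$ pigeonhole, the distance-stratified clique $\{\mu_t\}$, or the final non-adjacency count --- appears in your proposal. Moreover, your heuristics about ``full fibres,'' ``local perturbations of a single fibre,'' and ``$c$ sitting just below the threshold $4.1q$'' do not reflect how $c=4q+2$ is actually used: it is chosen so that the $(2q+2)$-clique $\{\mu_t\}$ occupies more than half of the colours while more than $c/2$ colours at $v$ are large, guaranteeing a suitable $b$ exists. As written, the second inequality remains unproved.
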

	\begin{proof}
		 The above formulation of the theorem is slightly different from the formulation in \cite{Shitov}. The proof   also seems  different. But all the   claims and lemmas are either stated in \cite{Shitov} or hidden in the text in \cite{Shitov}.

		 It is a classical result of Erd\H{o}s \cite{Erdos} that there are graphs of arbtirary large girth and large  chromatic number. This result is included in most graph theory textbooks (see	  \cite{West}).  
		The probabilistic proof of this result actually shows that there are graphs $G$ of arbitrary large girth and arbtirary small independence ratio $\alpha(G)/|V(G)|$. What we need here is a graph of girth $6$ and with $\alpha(G) \le |V(G)|/4.1$,   As $G[K_q]$ has the same independence number as $G$, we conclude that $$\chi(G[K_q]) \ge \frac{ |V(G[K_q])|}{   \alpha(G[K_q])} = \frac{ |V(G)|q }{  \alpha(G)} \ge 4.1q > c.$$
		 
		 Now we shall show that $\chi(K_c^{G[K_q]}) > c$. 
		 
		 Assume to the contrary that  $\chi(K_c^{G[K_q]}) =  c$ (recall that $K_c^{G[K_q]}$ has a $c$-clique and hence has chromatic number at least $c$), and  $\Psi$ is $c$-colouring of $K_c^G$. 
		 We may assume that the constant map  $g_i$ is coloured by colour $i$. 
		 Thus for any map $\phi \in K_c^{G[K_q]}$, if $i \notin Im(\phi)$, then $\phi \sim g_i$ and hence 
		 $\Psi(\phi) \ne i$. Thus we have the following observation.
		 
		 \begin{observation}
		 	\label{ob-image}
		 	For any map $\phi \in K_c^{G[K_q]}$, $\Psi(\phi) \in Im(\phi)$.
		 \end{observation}
		 
		 \begin{definition}
		 	  A map   $\phi \in K_c^{G[K_q]}$ is called {\em simple} if $\phi$ is constant on 
		 	  each copy of $K_q$ that is a blow-up of a vertex of $G$, i.e., for any $x \in V(G), i,j \in [q]$,
		 	  $\phi(x,i)=\phi(x,j)$.
		 \end{definition}
		 For simplicity, we shall write $\phi(x)$ for $\phi(x,i)$  when $\phi$ is a simple map. 
		 
		 Note that in $K_c^{G[K_q]}$, two simple maps $\phi$ and $\psi$ are adjacent if and only if 
		 for each edge $xy$ of $G$, $\phi(x) \ne \psi(y)$, and moreover, for each vertex $x$, $\phi(x) \ne \psi(x)$. This is so, because for $i \ne j \in [q]$, $(x,i)(x,j)$ is an edge of $G[K_q]$ and 
		 $\phi(x)$ is a shorthand for $\phi(x,i)$ and $\psi(x)$ is a shorthand for $\psi(x,j)$. 
		 
		 In this sense, the subgraph of $K_c^{G[K_q]}$ induced by simple maps is isomorphic to $K_c^{G^o}$, where 
		 $G^o$ is obtained from $G$ by adding a loop to each vertex of $G$. We shall just treat $K_c^{G^o}$ as an induced subgraph of $K_c^{G[K_q]}$ and write 
		 $\phi \in K_c^{G^o}$ to mean that $\phi$ is a simple map in $K_c^{G[K_q]}$.
		 Most of our argument is about properties of the subgraph $K_c^{G^o}$ of $K_c^{G[K_q]}$.

		 The graph $K_c^{G[K_q]}$ is a huge graph. 
		 As $G$ has girth $6$ and fractional chromatic number at least $4.1$,  $p=|V(G)|$ is probably about $200$. The number in $K_c^{G[K_q]}$ is $c^{pq}$, which is roughly $(2^{200})^{2^{200}}$.
		  The subgraph $K_c^{G^o}$ has $c^p$ vertices, which is roughly $(2^{200})^{200}$.
		  So $K_c^{G^o}$ is huge,  but it is a very tiny fraction of $K_c^{G[K_q]}$.
		 
		 \begin{definition}
		 	For $v \in V(G)$ and $b \in [c]$, let $$I(v,b)=\{\phi \in K_c^{G^o}: \Psi(\phi)=b = \phi(v)\}.$$		 
		 	\end{definition}
		 
Since $\Psi(\phi) \in Im(\phi)$ for any $\phi \in K_c^{G^o}$, we conclude that 
$$V(K_c^{G^o})  = \bigcup_{v \in V(G), b \in [c]}I(v,b).$$
		 As $K_c^{G^o}$ has $c^p$ vertices, the average size of $I(v,b)$ is 
		 $$\frac{c^p}{pc} = \frac{c^{p-1}}{p}.$$
		 	
		 	\begin{definition}
		 		We say $I(v,b)$ is {\em large} if $|I(v,b)| \ge 2pc^{p-2}$.
		 	\end{definition}
		 
		 Observe that $c$ is much  larger than $p$. The powers of $c$ is the dominating factor. So $2pc^{p-2}$ is much smaller than the average size of $I(v,b)$. Thus intuitively,  ``most" of the $I(v,b)$'s should be large. So the next lemma is not a surprise.

	\begin{lemma}
		\label{large}
		There exists a vertex $v$ of $G$ such that $$|\{b \in [c]: I(v,b) \text{ is large } \}| > c/2.$$
	\end{lemma}
	\begin{proof}
		For each vertex $v$ of $G$, let $S(v) = \{b: I(v,b) \text{ is small}\}$. Assume to the contrary that for each $v$, $|S(v)| \ge c/2$. Let 
		$${\cal L} = \{\phi \in K_c^{G^o}: \forall v \in V(G), \phi(v) \in S(v)\}.$$
		Then $$|{\cal L}| = \prod_{v \in V(G)}|S(v)| \ge \left( \frac c2 \right)^p.$$
		For any $\phi \in {\cal L}$, if $\phi \in I(v,b)$, then $I(v,b)$ is small. 
		Thus $${\cal L} \subset \bigcup_{v \in V(G), b \in [c], I(v,b) \text{ is small}}I(v,b).$$
		Therefore $|{\cal L}| < p\times c \times 2pc^{p-2} = 2p^2c^{p-1}$. But then 
		$$\left( \frac c2 \right)^p <  2p^2c^{p-1}$$
	which	implies that $c < 2^{p+1}p^2$. But by our choice of $c$, we have $c = 4q+2 > 4q \ge 2^{p+1}p^2$, a contradiction.
	\end{proof}
	
Let $v$ be a vertex of $G$ for which $|\{b \in [c]: I(v,b) \text{ is large } \}| > c/2.$
For $t \in \{2q+1, 2q+2, \ldots, 4q+2\}$, let $\mu_t \in K_c^{G[K_q]}$ be defined as
\[
\mu_t(x, i) = \begin{cases} i, & \text{ if $d_G(x,v)=0,2$,} \cr
q+i,  & \text{ if $d_G(x,v)=1$,} \cr
t, & \text{ if $d_G(x,v)\ge 3$}.
\end{cases}
\]

Observe that $\mu_t$ are not simple maps. These will be the only non-simple maps used in the proof.

\begin{claim}
	\label{clm-clique}
	The set of maps $\{\mu_t: t=2q+1, 2q+2, \ldots, 4q+2\}$ induces a clique in $K_c^{G[K_q]}$.
\end{claim}
\begin{proof}
	Assume to the contrary that for some $t \ne t'$, $\mu_t \not\sim \mu_{t'}$. Then there is an edge $(x,i)(y,j)$ of $G[K_q]$ such that $\mu_t(x,i) = \mu_{t'}(y,j)$. 
	Let $\alpha = \mu_t(x,i) = \mu_{t'}(y,j)$. 
	
	Then $\alpha \in Im(\mu_t) \cap Im(\mu_{t'})\subseteq  \{i, q+i, t\} \cap \{j, q+j, t'\}$.
	As $t \ne t'$, we conclude that $i=j$ and $\alpha=i$ or $q+i$. Since 
	$(x,i), (y,i)$ are distinct adjacent vertices, we conclude that
	 $x \ne y$ and $xy \in E(G)$. If $\alpha=i$, then 
	$d_G(x,v), d_G(y,v) \in \{0,2\}$ implies that $G$ has a $3$-cycle or a $5$-cycle, contrary to the assumption that $G$ has girth $6$. If $\alpha = q+i$, then $d_G(v,x)=d_G(v,y)=1$, and $G$ has a 3-cycle, again a contradiction.	This completes the proof of Claim \ref{clm-clique}.
\end{proof}

So maps $\{\mu_t: t=2q+1, 2q+2, \ldots, 4q+2\}$  are coloured by distinct colours, and hence there exists $t$ such that $\Psi(\mu_t) \not\in \{1,2,\ldots, 2q\}$. As $\Psi(\mu_t) \in Im(\mu_t) = \{1,2,\ldots, q, t\}$,
we  have $\Psi(\mu_t) = t$. 

Since $|\{b \in [c]: I(v,b) \text{ is large } \}| > c/2 = 2q+1$, there is a colour $b \in [c] - \{1,2,\ldots, 2q, t\}$ such that $I(v,b)$ is large. Let $\theta \in K_c^{G^o}$ be defined as follows:

\[
\theta(x) = \begin{cases} b, & \text{ if $d_G(x,v)\ge 2$,} \cr
t, & \text{ if $d_G(x,v)\le 1$}.
\end{cases}
\]

\begin{claim}
	\label{clm-ad}
$\theta \sim \mu_t$.
\end{claim}
\begin{proof}
	Assume to the contrary that $\theta \not\sim \mu_t$. Then there is an edge 
	$(x,i)(y,j) \in E(G[K_q])$ such that $\theta(x) = \theta(x,i) = \mu_t(y,j)$. (Note that $\theta(x,i)=\theta(x)$ as $\theta$ is a simple map). 
	As $Im(\theta) \cap Im(\mu_t) = \{t\}$, we conclude that $\theta(x) =  \mu_t(y,j) = t$.
	But then $d_G(x,v) \le 1$ and $d_G(y,v) \ge 3$, and hence $x \ne y$ and $xy \notin E(G)$, contrary to the assumption that 
	$(x,i)(y,j) \in E(G[K_q])$.
	\end{proof}
	
	Thus $\Psi(\theta) \ne \Psi(\mu_t) = t$. As $\Psi(\theta) \in Im(\theta)$, we conclude that $\Psi(\theta) = b$. 
	
	\begin{claim}
		\label{clm-in}
		For any $\phi \in I(v,b)$, there exists a vertex $x \ne v$ such that $\phi(x) \in \{b,t\}$.
	\end{claim}
	\begin{proof}
		 Assume $\phi \in I(v,b)$. By definition $\Psi(\phi) = b = \phi(v)$. 
		 So $\Psi(\phi) = \Psi(\theta)$. Hence $\phi \not\sim \theta$. So there is an edge 
		 $xy \in E(G^o)$ such that $\phi(x) = \theta(y)$. If $x = v$, then $\theta(y) = \phi(v)=b$.
		 By definition of $\theta$, we have $d_G(y,v) \ge 2$. Hence $xy$ cannot be an edge in $G^o$, a contradiction. 
		 So $x \ne v$. As $\phi(x) = \theta(y) \in \{b,t\}$, this completes the proof of the claim.	 
	\end{proof}
	
	For each $x \ne v$, let $$J_x = \{\phi(x) \in I(v,b): \phi(x) \in \{b,t\}\}.$$
	As $\phi(v)=b$ for $\phi \in I(v,b)$, we conclude that $|J_x| \le 2c^{n-2}$. 
	By Claim \ref{clm-in}, $I(v,b) = \cup_{x \in V(G)-\{v\}}J_x$. So 
	$|I(v,b)| \le 2(n-1)c^{n-2}$, contrary to the assumption that $I(v,b)$ is large.
	This completes the proof of Theorem \ref{thm-shitov}.
		\end{proof}
		
		\begin{remark}
			The key part of the proof of Theorem \ref{thm-shitov} is to show that $K_c^{G[K_q]}$  is not $c$-colourable. For each vertex $v$ of $G$, for $t \in \{2q+1, 2q+2, \ldots, 4q+2\}$, let 
			\[
			\mu_{v,t}(x, i) = \begin{cases} i, & \text{ if $d_G(x,v)=0,2$,} \cr
			q+i,  & \text{ if $d_G(x,v)=1$,} \cr
			t, & \text{ if $d_G(x,v)\ge 3$};
			\end{cases}
			\]
			Let $H$ be the subgraph of $K_c^{G[K_q]}$ induced by  $$V(K_c^{G^o}) \cup \{\mu_{v,t}: v \in V(G),  t \in \{2q+1, 2q+2, \ldots, 4q+2\}  \}.$$
			What we have proved is that the subgraph $H$ of $K_c^{G[K_q]}$ is not $c$-colourable. 
			Note that $H$ is a very tiny fraction of $K_c^{G[K_q]}$, although $H$ by itself is a huge graph. Possibly, the chromatic number of   $K_c^{G[K_q]}$ is much larger than $c$.
			\end{remark}
	
	\section{The Poljak-R\"{o}dl function}
	
	The Poljak-R\"{o}dl function is defined in \cite{PR}:
	$$f(n) = \min\{\chi(G \times H): \chi(G), \chi(H) \ge n\}.$$
	Hedetniemi's conjecture is equivalent to say that $f(n) =n$ for all psoitive integer $n$. Shitov's Theorem says that for sufficiently large $n$, $f(n) \le n-1$. Using Shitov's result,
	Tardif and Zhu \cite{TZ2019} proved that $f(n) \le n - \left(\log n\right)^{1/4}$. Tardif and Zhu asked in \cite{TZ2019} if there is a positive constant $\epsilon $ such that $f(n) \le (1-\epsilon)n$ for sufficiently large $n$. This question was answered in affirmative by He and Wigderson \cite{HW2019} with $\epsilon \approx 10^{-9}$. On the other hand, in \cite{TZ2019}, Tardif and Zhu proved that if a special case of a conjecture of Stahl \cite{Stahl} conserning the multi-chromatic number of Kneser graph is true, then we have $\lim_{n \to \infty} \frac{f(n)}{n} \le \frac 12$.  
	
	In this note, we that the conclusion  $\lim_{n \to \infty} \frac{f(n)}{n} \le \frac 12$ holds  without assuming Stahl's conjecture.

\begin{theorem}
	\label{thm-main}
	For $d \ge 1$, let $G$ be a graph of girth $6$ and with $\chi_f(G) \ge 8.1d$. Let $p=|V(G)|$, $q \ge  2^{p-1}p^2$ and $c = 4q+2$.
	Then $\chi(G[K_q]) \ge 2dc - 2c+2$ and 
	$\chi( K_{dc}^{G[K_q]}) \ge 2dc-2c+2$.
		Consequently, $f(2dc - 2c+2) \le dc$.
\end{theorem}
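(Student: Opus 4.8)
The consequence is immediate once the two chromatic lower bounds are in hand: the map $(x,f)\mapsto f(x)$ is a proper $dc$-colouring of $G[K_q]\times K_{dc}^{G[K_q]}$, so $\chi(G[K_q]\times K_{dc}^{G[K_q]})\le dc$; since both factors have chromatic number at least $2dc-2c+2$, the definition of $f$ gives $f(2dc-2c+2)\le dc$, and letting $d\to\infty$ yields $\lim f(n)/n\le 1/2$ because $dc/(2dc-2c+2)\to 1/2$. For $\chi(G[K_q])\ge 2dc-2c+2$ I would use the fractional chromatic number: blowing up multiplies it, so $\chi(G[K_q])\ge\chi_f(G[K_q])=q\,\chi_f(G)\ge 8.1\,dq$, and since $2dc-2c+2=8dq-8q+4d-2$, the bound $8.1\,dq\ge 2dc-2c+2$ holds because $q\ge 2^{p-1}p^2$ is enormous relative to $d$.

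The heart of the matter is $\chi(K_{dc}^{G[K_q]})\ge 2dc-2c+2$. My first observation is that the target is exactly the Lov\'asz value $\chi(\mathrm{KG}(2dc,c))=2dc-2c+2$ of the Kneser graph on $c$-subsets of $[2dc]$, which suggests proving the bound either by exhibiting a homomorphism $\mathrm{KG}(2dc,c)\to K_{dc}^{G[K_q]}$ and invoking Lov\'asz's theorem, or, equivalently, by re-running Shitov's counting with $dc$ colours in place of $c$. I would set it up as in Theorem~\ref{thm-shitov}: assume a proper colouring $\Psi$ using $2dc-2c+1$ colours, use the $dc$-clique of constant maps $g_1,\ldots,g_{dc}$ to normalise $\Psi(g_i)=i$, and record the weakened image containment $\Psi(\phi)\in Im(\phi)\cup E$, where $E$ collects the remaining $|E|=dc-2c+1$ colours. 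The clique step generalises cleanly: for a fixed $v$ the maps $\mu_{v,t}$ with $t\in\{2q+1,\ldots,dc\}$ still form a clique by the girth-$6$ argument of Claim~\ref{clm-clique}, there are $dc-2q$ of them, and those with $\Psi(\mu_{v,t})\ne t$ are confined to the $2q+|E|$ colours of $\{1,\ldots,2q\}\cup E$; hence at least $(dc-2q)-(2q+|E|)=c+1$ indices satisfy $\Psi(\mu_{v,t})=t$. From here one would mimic the finale of Theorem~\ref{thm-shitov}: pick $v$ with many \emph{large} sets $I(v,b)$, choose a large $I(v,b)$ with $b$ a high colour and a compatible $t$, build $\theta$ (value $b$ far from $v$, value $t$ near $v$), show $\theta\sim\mu_{v,t}$, deduce $\Psi(\theta)=b$, and contradict the largeness of $I(v,b)$ via Claim~\ref{clm-in}.

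The step I expect to be the main obstacle is controlling the extra colours $E$, and this is exactly where the factor-$2$ improvement must be paid for. Two places break in a naive translation. First, the large-$I(v,b)$ lemma degrades: maps coloured from $E$ are not covered by the $I(v,b)$, and since $K_{dc}^{G^o}$ has independent sets of size $(dc)^{p-1}$, these escapees absorb so much of the budget that the lemma only guarantees about $2c/p$ large colours at some vertex — fewer than the $2q$ low colours $\{1,\ldots,2q\}$, since $2c/p<2q$ — so one cannot even secure a large $I(v,b)$ with $b>2q$. Second, even granting a good triple $(v,b,t)$, the weakened containment only gives $\Psi(\theta)\in\{b\}\cup E$, so $\Psi(\theta)$ may slip into $E$ and the final contradiction evaporates.

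Overcoming this — forcing the count up to the doubled value $2dc-2c+2$ despite the $dc-2c+1$ rogue colours — is precisely the content that in \cite{TZ2019} was supplied by Stahl's conjecture, and the aim here is to replace that input by the unconditional Lov\'asz--Kneser bound $\chi(\mathrm{KG}(2dc,c))=2dc-2c+2$, realised inside $K_{dc}^{G[K_q]}$ through the blow-up and girth-$6$ structure. I expect the cleaner route is to build the homomorphism $\mathrm{KG}(2dc,c)\to K_{dc}^{G[K_q]}$ directly, the genuine difficulty being that disjoint images alone cannot do the job: assigning to each $c$-subset its image would give a homomorphism into a Kneser graph on $[dc]$, which is impossible since $2dc-2c+2>dc$ for $d\ge 2$. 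Thus the edges of $G[K_q]$ must be used in an essential way to certify adjacency of the maps attached to disjoint subsets, and carrying out that construction is where the real work of this theorem lies.
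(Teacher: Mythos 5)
Your first paragraph (the bound on $\chi(G[K_q])$ via the fractional chromatic number of the blow-up, and the deduction of $f(2dc-2c+2)\le dc$ from the colouring $(x,f)\mapsto f(x)$) is correct and matches the paper. But for the core claim $\chi(K_{dc}^{G[K_q]})\ge 2dc-2c+2$ you do not give a proof: you correctly diagnose that a naive re-run of Shitov's counting with $dc$ colours breaks down, and you then propose constructing a homomorphism $\mathrm{KG}(2dc,c)\to K_{dc}^{G[K_q]}$ while conceding that ``carrying out that construction is where the real work of this theorem lies.'' That is a genuine gap, and moreover the homomorphism you aim for points in the wrong direction relative to what is actually needed.

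The missing idea is to use Theorem~\ref{thm-shitov} as a black box, locally, rather than to redo its counting globally. Fix a proper colouring $\Psi$ with colour set $[dc]\cup S$, normalised so that $\Psi(g_i)=i$, whence $\Psi(\phi)\in Im(\phi)\cup S$ for every map $\phi$. For each $c$-subset $A$ of the \emph{primary} colour set $[dc]$, the maps $\phi$ with $Im(\phi)\subseteq A$ induce a subgraph $H_A$ isomorphic to $K_c^{G[K_q]}$, which by Theorem~\ref{thm-shitov} is not $c$-colourable; since every $\phi\in H_A$ satisfies $\Psi(\phi)\in A\cup S$ and $|A|=c$, the set $\Psi(H_A)$ must meet $S$. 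Choosing $\tau(A)\in \Psi(H_A)\cap S$ and noting that disjoint $A,B$ make $H_A$ and $H_B$ completely joined (disjoint images force adjacency), so that $\Psi(H_A)\cap\Psi(H_B)=\emptyset$, one sees that $\tau$ is a proper colouring of the Kneser graph $K(dc,c)$ --- on $c$-subsets of $[dc]$, not of $[2dc]$ --- by the colours of $S$. Lov\'asz's theorem then gives $|S|\ge dc-2c+2$, hence $dc+|S|\ge 2dc-2c+2$. So the Kneser graph enters not as a subgraph to be embedded in $K_{dc}^{G[K_q]}$ (which, as you yourself observe, cannot work via images alone), but as the recipient of a colouring extracted from the secondary colours; none of the internal machinery of Shitov's proof (the sets $I(v,b)$, the maps $\mu_{v,t}$, the map $\theta$) needs to be revisited.
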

\begin{proof}
	Similarly as in the proof of Theorem \ref{thm-shitov},  $\chi(G[K_q]) \ge \chi_f(G) q \ge 8.1d q \ge 2dc > 2dc-2c+2$. Now we show that $\chi( K_{dc}^{G[K_q]}) \ge 2dc-2c+2$.
	
	Assume $\Psi$ is a $(dc+t)$-colouring of $K_{dc}^{G[K_q]}$ with colour set $[dc+t]$. We shall show that 
	$dc+t \ge  2dc-2c+2$, i.e., $t \ge dc-2c+2$. Let $S=[dc+t]- [dc]$. The colours in $[dc]$ are called {\em primary colours} and colours in $S$ are called {\em secondary colours}. So we have $t=|S|$ secondary colours.
	
	Similarly as before, we may assume that $\Psi(g_i)=i$ for $i \in [dc]$.
	Then for any map $\phi \in K_{dc}^{G[K_q]}$, if $i \notin Im(\phi)$, then $\phi \sim g_i$ and $\Psi(\phi) \ne i$. Thus for any $\phi \in K_{dc}^{G[K_q]}$, $\Psi(\phi) \in Im(\phi) \cup S$. 
	
	For positive integers $m \ge 2k$, $K(m, k)$ is the Kneser graph whose vertices are $k$-susbets of $[m]$, and for 
	two $k$-subsets $A, B$ of $[m]$, 
  $A \sim B$ if $A \cap B = \emptyset$. 
  It was proved by Lov\'asz in \cite{Lovasz} that 
  $\chi(K(m,k))= m-2k+2$.
	
	For a $c$-subset $A$ of $[cd]$, let $H_A$ be the subgraph of $K_{cd}^{G[K_q]}$ induced by 
	$$\{\phi \in V(K_{cd}^{G[K_q]}): Im(\phi) \subseteq A\}.$$
	Then $H_A$ is isomorphic to $K_c^{G[K_q]}$.
	By Theorem \ref{thm-shitov}, $|\Psi(H_A)| \ge c+1$.  As $|Im(\phi)|=c$, $\Phi(H_A)$ contains at least one secondary colour. Let $\tau(A)$ be an arbitrary secondary colour contained in $\Psi(H_A)$. 
	
	If $A, B$ are $c$-subsets of $[dc]$ and $A \cap B = \emptyset$, then every vertex in $H_A$ is adjacent to every vertex in $H_B$. Hence $\Psi(H_A) \cap \Psi(H_B) = \emptyset$. In particular, $\tau(A) \ne \tau(B)$. Thus $\tau$ is a proper colouring of the Kneser graph $K(dc, c)$. As $\chi(K(dc, c)) = dc-2c+2$, we conclude that $t=|S| \ge dc-2c+2$. This completes the proof of Theorem \ref{thm-main}. 	
\end{proof}

For a  positive integer $d$, let $p=p(d)$ be the minimum number of vertices of a graph $G$ with girth $6$ and $\chi_f(G) \ge 8.1d$. 
It follows from Theorem \ref{thm-main} that for any integer $q \ge p^22^{p-1}$, 
$f(2(d-1) (4q+2) +2) \le (4q+2) d$.
As $f(n)$ is non-decreasing, for integers $n$ in the interval   $[2(d-1) (4q+2)  +2,   2(d-1) (4q+6) +2]$,
we have $f(n) \le  (4q+6) d$.

Hence for all integers $n \ge  2 (4q+2) (d-1)+2$,  $$\frac{f(n)}{n} \le  \frac{(4q+6)d}{2(4q+2)(d-1)+3} = \frac 12 +  \frac {4q+ 4d+1}{ 2(d-1) (4q+2)  +2}.$$
Note that if $d \to \infty$, then $p=p(d)$ goes to infinity, and hence  $q \ge p^32^p $ goes to infinity.  Hence 
$$\lim_{n \to \infty} \frac{f(n)}{n} \le \frac 12.$$
 
Theorem \ref{thm-main} improves the result of He and Wigderson \cite{HW2019}. However, He and Wigderson uses a modification of Shitov's method, which might be of independent interest.

			In the proof of Theorem \ref{thm-main}, we actually showed that a 
			tiny subgraph of $K_{dc}^{G[K_q]}$ has chromatic number close to 
			$2dc$.  It is not clear if the remaining part of the graph $K_{dc}^{G[K_q]}$
		 can be used to show that this graph actually have a much larger chromatic number. We observe that   if one can show that the chromatic number of $K_{dc}^{G[K_q]}$ is more than $kdc$ for some positive integer $k$, then Stahl's conjecture implies that $\lim_{n \to \infty} \frac{f(n)}{n} \le \frac{1}{k+1}  $. 
	 
		 \section{Lower bound for $f(n)$} 
		 
		 The breakthrough result of Shitov leads to improvement of the upper bound for the function $f(n)$. On the other hand, the only known lower bound for $f(n)$ is that $f(n) \ge 4$ for $n \ge 4$. We do not know if $f(n)$ is bounded by a constant or not. 
		 What we do know is that if $f(n)$ is bounded by a constant, then the smallest such constant is at most $9$. 
		 
		 To prove this result, we need to consider the product of digraphs. For a digraph $D$, we use $A(D)$ to denote the set of arcs of $D$. An arc in $D$ is either denoted by an ordered pair $(x,y)$, or by an arrow
		 $x \to y$. Digraphs are allowed to have digons.
		 
		  Assume $D_1, D_2$ are digraphs.
		 The product $D_1 \times D_2$ has vertex set $V(D_1) \times V(D_2)$, with
		 $(x,y) \to (x',y')$ be an arc if and only if $(x,x')$ is an arc in $D_1$ and $(y,y')$ is an arc in $D_2$. The chromatic number of a digraph $D$ is defined to be $\chi(\underline{D})$, where $\underline{D}$ is the underline graph of $D$, i.e., obtained from $D$ by replacing each arc $(x,y)$ with an edge $xy$. Given a digraph $D$, let $D^{-1}$ be the digraph obtained from $D$ by reversing the direction of all its arcs. It is easy to see that 
		 for any digraphs $D_1, D_2$, $$\underline{D_1} \times \underline{D_1} = (\underline{D_1 \times D_2})
\cup (\underline{D_1 \times D_2^{-1}}).$$	
Hence $$\chi (\underline{D_1} \times \underline{D_1}) \le \chi (D_1 \times D_2) \times \chi (D_1 \times D_2^{-1}).$$
Let 
\begin{eqnarray*}
g(n) &=& \min\{\chi(D_1 \times D_2): \chi(D_1), \chi(D_2) \ge n\},\\
h(n) &=& \min\{\max\{\chi(D_1 \times D_2), \chi(D_1 \times D_2^{-1})\}: \chi(D_1), \chi(D_2) \ge n\}.
\end{eqnarray*}	
It follows from the discussion above that 
$$g(n) \le h(n) \le  f(n) \le h(n)^2.$$ 

The following result was proved by Poljak and R\"{o}dl in \cite{PR}.

\begin{theorem}
	\label{thm-lower}
	If $g(n)$ (respectively $h(n)$) is bounded by a constant, then the smallest such constant is at most $4$. 
	Consequently, if $f(n)$ is bounded by a constant,  then the smallest  such constant is at most $16$.
\end{theorem}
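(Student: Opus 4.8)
The plan is to reduce both halves of the theorem to a single mechanism: the \emph{line digraph}. For a digraph $D$, let $\delta D$ be the digraph whose vertices are the arcs of $D$, with an arc from $e$ to $e'$ precisely when the head of $e$ equals the tail of $e'$. I would first record three facts. \emph{(Multiplicativity)} $\delta(D_1\times D_2)=\delta D_1\times\delta D_2$, immediate from $A(D_1\times D_2)=A(D_1)\times A(D_2)$ together with the head/tail condition. \emph{(Compatibility with reversal)} $\delta(D^{-1})=(\delta D)^{-1}$, again by inspecting the head/tail condition. \emph{(Functoriality)} a homomorphism $D\to D'$ induces a homomorphism $\delta D\to\delta D'$, since it sends a pair of consecutive arcs to a pair of consecutive arcs.

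Next I would quantify how $\delta$ moves the chromatic number, in both directions. From a proper arc-colouring of $D$ with $k$ colours (equivalently a proper colouring of $\delta D$) one builds a proper colouring of $D$ with at most $2^{k}$ colours by colouring each vertex $v$ with the set of colours on the out-arcs at $v$: if $u\to v$ has colour $i$ then $i$ appears on an out-arc of $u$ but, by properness, on no out-arc of $v$. This gives $\chi(D)\le 2^{\chi(\delta D)}$, so $\delta$ cannot shrink the chromatic number below its logarithm; this is what keeps the \emph{factors} large. In the other direction, composing a proper $N$-colouring $D\to K_N^{*}$ (where $K_N^{*}$ is the complete digraph with all digons on $N$ vertices and $N=\chi(D)$) with functoriality gives $\chi(\delta D)\le\chi(\delta K_N^{*})=:\beta(N)$. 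The exact value $\beta(N)=\min\{k:\binom{k}{\lfloor k/2\rfloor}\ge N\}$ is the Poljak--R\"odl determination of the arc-chromatic number of the complete digraph, and this is the step I expect to be the technical heart: for the upper bound one labels each of the $N$ colours by a distinct $\lfloor k/2\rfloor$-subset of $[k]$ and colours an arc $u\to v$ by an element of $S_u\setminus S_v$ (consecutive arcs then get distinct colours because the label of the middle vertex both contains and omits the relevant elements), and the lower bound is an extremal/antichain argument.

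The decisive arithmetic is that $4$ is the largest fixed point of $\beta$: one checks $\beta(2)=2$, $\beta(3)=3$, $\beta(4)=4$, while $\beta(m)<m$ for every $m\ge 5$ (e.g. $\beta(5)=\beta(6)=4$), and $\beta$ is nondecreasing. Hence iterating $\beta$ from any value descends to at most $4$ in finitely many steps. I would now run the reduction for $g$. Since $g$ is nondecreasing, its least bounding constant equals $\lim_n g(n)$, so it suffices to prove $g(n_0)\le 4$ for an arbitrary target $n_0$. Assuming $g$ is bounded by some $B$, choose $T$ so large that the $B$-fold iterated logarithm of $T$ still exceeds $n_0$ (a tower of height $B$). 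Boundedness yields digraphs $A,B_0$ with $\chi(A),\chi(B_0)\ge T$ and $\chi(A\times B_0)\le B$. Applying $\delta$ repeatedly and using multiplicativity, after $j$ steps the factors $\delta^{j}A,\delta^{j}B_0$ still have chromatic number at least the $j$-fold iterated logarithm of $T$, hence at least $n_0$ for all $j\le B$, whereas $\chi(\delta^{j}A\times\delta^{j}B_0)=\chi(\delta^{j}(A\times B_0))\le\beta^{(j)}(\chi(A\times B_0))\le 4$ for some $j_0\le B$. Those two factors witness $g(n_0)\le 4$.

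The case of $h$ is structurally identical: by compatibility with reversal, $\delta(D_1\times D_2^{-1})=\delta D_1\times(\delta D_2)^{-1}$, so one application of $\delta$ sends the pair $\big(\chi(D_1\times D_2),\ \chi(D_1\times D_2^{-1})\big)$ to a pair each of whose entries is at most $\beta$ of the old one; the same tower-of-logarithms bookkeeping drives the maximum down to $4$ while $\chi(\delta^{j}D_1),\chi(\delta^{j}D_2)\ge n_0$, giving $\lim h\le 4$. The statement about $f$ is then formal: $h(n)\le f(n)$ shows that boundedness of $f$ forces boundedness of $h$, so $h(n)\le 4$ for large $n$, and the inequality $f(n)\le h(n)^{2}$ established above gives $f(n)\le 16$ for large $n$; as $f$ is nondecreasing, its least bounding constant is at most $16$. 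The main obstacle throughout is the exact arc-chromatic computation pinning $\beta$ (and hence the fixed point $4$), combined with the simultaneous two-sided control of $\delta$ — compressing the product's chromatic number while certifying that the factors stay above $n_0$ — which is precisely what the choice of the tower height of $T$ is engineered to guarantee.
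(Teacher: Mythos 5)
Your argument is correct, and its engine is exactly the paper's: your $\delta$ is the paper's $\partial$, your three structural facts (multiplicativity, compatibility with reversal, functoriality) appear verbatim or implicitly in the paper, and your two-sided estimate $\min\{k:2^{k}\ge\chi(D)\}\le\chi(\partial(D))\le\min\{k:\binom{k}{\lceil k/2\rceil}\ge\chi(D)\}$ is the paper's Lemma on $\partial$ (your $\beta$; note $\lfloor k/2\rfloor$ versus $\lceil k/2\rceil$ is immaterial). Where you genuinely diverge is the endgame. The paper applies $\partial$ only \emph{once}: taking $C$ to be the least bound and $n_0$ with $g(n)=C$ for all $n\ge n_0$, it picks a pair achieving $C$ at level $2^{n_0}$, notes that after one application of $\partial$ the factors still have chromatic number at least $n_0$ so the product still has chromatic number at least $C$, and reads off $C\le\min\{k:\binom{k}{\lceil k/2\rceil}\ge C\}$, which forces $C\le 4$. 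You instead iterate $\delta$ up to $B$ times from a tower of height $B$ and use the descent $\beta(m)<m$ for $m\ge 5$ to push the product's chromatic number down to $4$ while the iterated logarithms keep the factors above an arbitrary target $n_0$. Both are sound; the paper's fixed-point version is shorter and avoids the tower, while yours uses only boundedness (not minimality of the bound) and makes the descent mechanism explicit. Two minor remarks: you do not actually need the exact arc-chromatic number of the complete digraph, only the upper bound $\chi(\delta K_N^{*})\le\beta(N)$, which your subset-labelling construction already provides; and your final passage from $h$ to $f$ via $h(n)\le f(n)\le h(n)^{2}$ is precisely the paper's.
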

\begin{proof}
	For a graph $D$, let $\partial(D)$ be the digraph with vertex set $A(D)$, with $(x,y) \to (x',y')$ be an arc of $\partial(D)$ if and only if $y=x'$. In particular, if $(x,y), (y,x)$   is a digon in $D$, then $(x,y) \to (y,x)$ and $(y,x) \to (x,y)$ is a digon in $\partial(D)$.
	
	\begin{lemma}
		\label{lem-rel}
		For any digraph $D$, $$\min\{k: 2^k \ge \chi(D)\} \le \chi(\partial(D)) \le \min\{k: {k \choose \lceil k/2 \rceil} \ge \chi(D)\}.$$
	\end{lemma}
	\begin{proof}
		If $\phi: V(\partial(D)) \to [k]$ is a proper colouring of $\partial(D)$, then for each vertex $v$ of $D$, let $\psi(v)= \{\phi(e): e \in A^+(v)\}$, where $A^+(v)$ is the set of out-arcs at $v$. Then $\psi$ is a proper colouring of $D$ (with subsets of $[k]$ as colours). Indeed, if $e=(x, y)$ is an arcs of $D$, then $\phi(e)  \in \psi(x) - \psi(y)$. So $\psi(x) \ne \psi(y)$. The number of colours used by $\psi$ is at most the number of subsest of $[k]$, which is   $2^k$. 
		
		If $\psi: V(D) \to {k \choose \lceil k/2 \rceil }$ is a proper colouring of $D$ (where the colours are $ \lceil k/2 \rceil$-subsets of $[k]$), then for any arc 
		$e=(x,y)$ of $D$, let $\phi(e)$ be any integer in $\psi(y)-\psi(x)$ (as $\psi(y) \ne \psi(x)$, such an integer exists). Then if $(x,y) \to (y,z)$ is an arc in $\partial(D)$, then 
		$\phi(x,y) \in \psi(y)$ and $\phi(y,z) \notin \psi(y)$. Hence $\phi(x,y) \ne \phi(y,z)$. I.e., $\phi$ is a proper colouring of $\partial(D)$. This completes the proof of Lemma \ref{lem-rel}. 	
	\end{proof}
	
	It follows easily from the definition that 
	\begin{eqnarray*}
		\partial(D_1 \times D_2) &=& \partial(D_1) \times \partial(D_2),\\
		\partial(D^{-1})&=&  (\partial(D))^{-1}.
	\end{eqnarray*}	
	
	Suppose $g(n)$ is bounded and $C$ is the smallest upper bound. As $g(n)$ is non-decreasing, there is an integer $n_0$ such that $g(n)=C$ for all $n \ge n_0$. Let $n_1=2^{n_0}$, and let $D_1, D_2$ be digraphs
	with $\chi(D_1), \chi(D_2) \ge n_1$ and $\chi(D_1 \times D_2) = C$. It follows from Lemma \ref{lem-rel} that 
	$\chi(\partial(D_1)),\chi(\partial(D_2)) \ge n_0$ and hence 
	$\chi(\partial(D_1) \times \partial(D_2)) = \chi(\partial(D_1 \times D_2))  \ge C$. 
	By Lemma \ref{lem-rel} again, we have 
	$$C \ge \chi(D_1 \times D_2) > {C-1 \choose \lceil (C-1)/2 \rceil}.$$
	This implies that $C \le 4$. 
	
	The same argument shows that if $h(n)$ is bounded by a constant, then the smallest such constant is at most $4$. Since $h(n) \le f(n) \le h(n)^2$, if $f(n)$ is bounded by a constant, then the smallest such a constant is at most $16$.
		 \end{proof}
		 
		 Next we show that if $g(n)$ (respectively, $h(n)$)  is bounded by a constant, then the smallest such constant cannot be $4$. Assume to the contrary that the smallest constant bound for $g(n)$ is $4$. 
		 Let $n_0$ be the integer given above, and let $n_1=2^{n_0}, n_2=2^{n_1}$. Then 
		 $g(n_2)=g(n_1)=g(n_0)=4$. Let $D_1, D_2$ be two digraphs with $\chi(D_1),\chi(D_2) \ge n_2$
		 and $\chi(D_1 \times D_2) = 4$. The same argument as above shows that 
		 $$\chi(\partial(\partial(D_1 \times D_2)))=4.$$
	 
		 However, we shall show that if $\chi(D) \le 4$, then $\chi(\partial(\partial(D))) \le 3$. 
		 Let $\vec{K}_4$ be the complete digraph with vertex set $\{1,2,3,4\}$, with $(i,j)$ be an arc   for any distinct $i,j \in \{1,2,3,4\}$.  If $\chi(D) =4$, then $D$ admits a homomorphism to $\vec{K}_4$.
		 Hence $\partial(\partial(D))$ admits a homomorphism to $\partial(\partial(\vec{K}_4))$. 
		 So it suffices to show that $\partial(\partial(\vec{K}_4)) \le 3$. In 1990, I was a Ph.D. 
		 student at The University of Calgary. After reading the paper by Poljak and R\"{o}dl \cite{PR}, 
		 I found a $3$-colouring of $\partial(\partial(\vec{K}_4))$ by brute force.
		 I was happy to tell this to my supervisor Professor Norbert Sauer, who then told the result to Duffus. Then I learned from Duffus the following elegant $3$-colouring of $\partial(\partial(\vec{K}_4))$, given earlier by Schelp and was not published.
		 
		 Each vertex of $\partial(\partial(\vec{K}_4))$ is a sequence $ijk$ with $i,j,k \in [k]$, $i \ne j, j \ne k$ (but $i$ may equal to $k$). Let 
		 
		 \[
		 c(ijk) = \begin{cases} j, & \text{ if $j \ne 4$,} \cr
		 s, & \text{ if $j=4$ and $s \in \{1,2,3\}-\{i,k\}$}
		 \end{cases}
		 \]
		 Then it is easy to verify that $c$ is a proper $3$-colouring of $\partial(\partial(\vec{K}_4))$.
		 This completes the proof that $g(n)$ is either bounded by $3$ or goes to infinity. Similarly, 
		 $h(n)$ is either bounded by $3$ or goes to infinity, and consequently, $f(n)$ is either bounded by $9$ or goes to infinity. 
		 
		 Later I learned from Hell that Poljak 
		 also obtained this strengthening independently and that was published in 1992 \cite{Poljak}.
		 
		 Tardif and Wehlau \cite{TW2004} proved that $f(n)$ is bounded if and only if $g(n)$ is bounded.
		 
		 The fractional version of Hedetniemi's conjecture was proved in  \cite{Zhu-frac}:
		For any two graphs $G$ and $H$, $\chi_f(G \times H) = \min \{\chi_f(G), \chi_f(H)\}$.
		Thus if $f(n)$ is bounded by $9$, and  $G$ and $H$ are $n$-chromatic graphs with 
		$\chi(G \times H) \le 9$, then at least one of $G$ and $H$  has   fractional chromatic number at most $9$. 
		
		In \cite{Zhu-frac}, I defined the following Poljak-R\"{o}dl type function:
		$$\psi(n) = \min \{\chi(G \times H): \chi_f(G), \chi(H) \ge n\}.$$
		I proposed a weaker version of Hedetniemi's conjecture, which is equivalent says that 
		$\psi(n)=n$ for all positive integer $n$. However, Shitov's proof actually refutes this weaker version of Hedetniemi's conjecture, as the graph $G$ used in the proof of Theorem \ref{thm-shitov}  have large fractional chromatic number. The proof of Theorem \ref{thm-main}   shows that 
		$$\lim_{n \to \infty} \frac{\psi(n)}{n} \le \frac 12.$$
		On the other hand, it follows from the definition that 
		$f(n) \le \psi(n)$. A natural question is the following: 
		
		\begin{question}
			 Is $\psi(n)$ bounded by a constant?
			 If $\psi(n)$ is bounded by a constant, what could be  the smallest such constant?
		\end{question}

\end{document}